\theoremstyle{theorem}
\newtheorem{theorem}{Theorem}
\newtheorem{lemma}{Lemma}
\theoremstyle{definition}
\newtheorem*{remark}{Remark}
\newtheorem{assumption}{Assumption}
\begin{document}

\title{A simple proof of existence of Lagrange multipliers}
\markright{A simple proof of existence of Lagrange multipliers}
\author{Gabriel Haeser${}^1$ and Daiana Oliveira dos Santos${}^2$}

\date{August 16, 2022\\[0.2cm]
 ${}^1$ Department of Applied Mathematics, University of S\~ao Paulo, S\~ao Paulo-SP,
Brazil. Email: ghaeser@ime.usp.br\\
 ${}^2$ Paulista School of Politics, Economics and Business, Federal University of S\~ao
Paulo, Osasco-SP, Brazil. Email: daiana.santos@unifesp.br
}

\maketitle

In the seminal book {\it M\'echanique analitique, Lagrange, 1788}, the notion of a Lagrange multiplier was first introduced in order to study a smooth minimization problem subject to equality constraints. The idea is that, under some regularity assumption, at a solution of the problem, one may associate a new variable (Lagrange multiplier) to each constraint such that an equilibrium equation is satisfied. This concept turned out to be central in studying more general constrained optimization problems and it has lead to the rapid development of nonlinear programming as a field of mathematics since the works of Karush \cite{karush} and Kuhn-Tucker \cite{kuhn-tucker}, who considered equality and inequality constraints. The usual proofs for the existence of Lagrange multipliers are somewhat cumbersome, relying on the implicit function theorem or duality theory. In the first section of this note we present an elementary proof of existence of Lagrange multipliers in the simplest context, which is easily accessible to a wide variety of readers. In addition, this proof is readily extended to the much more general context of conic constraints, which we present in the second section together with the background properties needed on the projection onto a closed and convex cone.

\section{Lagrange multipliers for equality constraints}
Let us start by considering the problem
\begin{equation}\label{eq}\begin{aligned}
\mathop{\mbox{Minimize }}_{x\in\mathbb{R}^n}&f(x),\\
\mbox{subject to }&h_1(x)=0,\dots,h_m(x)=0,
\end{aligned}\end{equation}
where $f,h_1,\dots,h_m:\mathbb{R}^n\to\mathbb{R}$ are continuously differentiable functions. Denoting $h:=(h_1,\dots,h_m)$, our goal is to show that when $\overline{x}$ is a local solution of \eqref{eq}, that is, $h(\overline{x})=0$ and $f(\overline{x})\leq f(x)$ for all $x$ sufficiently close to $\overline{x}$ such that $h(x)=0$, under some regularity condition, there exist so-called Lagrange multipliers $\lambda_1,\dots\lambda_m\in\mathbb{R}$ such that
\begin{align}
\label{lagrange}
\nabla f(\overline{x})+\sum_{i=1}^m\lambda_i\nabla h_i(\overline{x})=0.
\end{align}
That is, at $\overline{x}$, the gradient of $f(\cdot)$ is a linear combination of the gradients of $h_i(\cdot), i=1,\dots,m$, where we use $\nabla$ to denote the gradient operator. The regularity condition we will employ is the following:
\begin{assumption}
\label{licq}
The gradients of the constraints at $\overline{x}$, that is, $\nabla h_1(\overline{x}),\dots,\nabla h_m(\overline{x})$, are linearly independent.
\end{assumption}
After the proof we will discuss how this assumption can be relaxed. Our goal is to prove:
\begin{theorem}\label{lagreq}Let $\overline{x}$ be a local solution of \eqref{eq} that satisfies Assumption \ref{licq}. Then, there exist so-called Lagrange multipliers $\lambda_1,\dots,\lambda_m\in\mathbb{R}$ such that \eqref{lagrange} holds.
\end{theorem}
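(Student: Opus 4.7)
The plan is to use the classical external quadratic penalty method, which produces the multipliers as explicit limits and avoids both the implicit function theorem and any duality arguments. The key idea is to approximate \eqref{eq} by a sequence of unconstrained problems, read off the unconstrained first-order conditions, and pass to the limit, with Assumption \ref{licq} entering only at the very end to guarantee that the multiplier sequence does not blow up.

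First, I would localize by fixing a closed ball $\overline{B}:=\{x\in\mathbb{R}^n : \|x-\overline{x}\|\leq\delta\}$ small enough that $\overline{x}$ minimizes $f$ over the feasible points in $\overline{B}$. For each $k\in\mathbb{N}$, I would set
$$F_k(x) := f(x) + \frac{k}{2}\|h(x)\|^2 + \frac{1}{2}\|x-\overline{x}\|^2,$$
and pick a minimizer $x_k$ of $F_k$ on $\overline{B}$, which exists by continuity and compactness. The inequality $F_k(x_k)\leq F_k(\overline{x})=f(\overline{x})$ forces $\|h(x_k)\|^2=O(1/k)$, so every cluster point $x^*$ of $(x_k)$ is feasible; dropping the penalty term in the same inequality and passing to the limit yields $f(x^*)+\frac{1}{2}\|x^*-\overline{x}\|^2\leq f(\overline{x})$, and local optimality of $\overline{x}$ then forces $x^*=\overline{x}$, so the full sequence converges to $\overline{x}$.

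Because $x_k\to\overline{x}$, for large $k$ the minimizer $x_k$ lies in the interior of $\overline{B}$ and is therefore an unconstrained critical point of the smooth functional $F_k$. Setting $\lambda_i^k := k h_i(x_k)$, the condition $\nabla F_k(x_k)=0$ reads
$$\nabla f(x_k)+\sum_{i=1}^m \lambda_i^k\nabla h_i(x_k)+(x_k-\overline{x})=0.$$
The main obstacle is showing that the sequence $(\lambda^k)$ stays bounded, and this is exactly where Assumption \ref{licq} enters. I would argue by contradiction: if $\|\lambda^k\|\to\infty$ along a subsequence, dividing the last equation by $\|\lambda^k\|$ and passing to the limit (using $x_k\to\overline{x}$ and continuity of the $\nabla h_i$) would yield $\sum_{i=1}^m\mu_i\nabla h_i(\overline{x})=0$ for some unit vector $\mu$, contradicting the linear independence of the gradients at $\overline{x}$.

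Hence $(\lambda^k)$ is bounded; extracting a convergent subsequence $\lambda^k\to\lambda$ and passing to the limit in the displayed first-order condition then delivers the identity \eqref{lagrange}.
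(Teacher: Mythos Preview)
Your argument is correct and essentially identical to the paper's own proof: the same localized quadratic penalty $f(x)+\frac{k}{2}\|h(x)\|^2+\frac{1}{2}\|x-\overline{x}\|^2$ minimized over a closed $\delta$-ball, the same convergence argument via $F_k(x_k)\leq f(\overline{x})$, the same multipliers $\lambda_i^k=kh_i(x_k)$ read off from $\nabla F_k(x_k)=0$, and the same normalization-and-limit contradiction to establish boundedness under Assumption~\ref{licq}.
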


The proof, whose main ingredients can be found in \cite{andreani2011}, consists in building a sequence $\{x^k\}_{k\in\mathbb{N}}\to\overline{x}$ where, for each $k\in\mathbb{N}$, $x^k$ will be a local solution for the unconstrained minimization of $f(\cdot)$ plus a penalization term that increasingly forces the fulfillment of the constraints. The fact that the derivative of this function should vanish at $x^k$ will be enough for defining a sequence of approximate Lagrange multipliers which will be shown to be bounded under our assumption, yielding true Lagrange multipliers at its limit points. We will use $\|\cdot\|$ to denote the euclidean norm on any euclidean space.

\begin{proof}
Let $\delta>0$ be such that $f(\overline{x})\leq f(x)$ for all $x$ with $h(x)=0$ and $\|x-\overline{x}\|\leq\delta$, and let us consider the following sequence of penalized subproblems, for $k\in\mathbb{N}$
\begin{align*}
\mathop{\mbox{Minimize }}_{x\in\mathbb{R}^n}&\phi_k(x):=f(x)+\frac{1}{2}\|x-\overline{x}\|^2+\frac{k}{2}P(x),\\
\mbox{subject to }&\|x-\overline{x}\|\leq\delta,
\end{align*}
where $P(x):=\|h(x)\|^2$. By Weierstrass' extreme value theorem, since $\phi_k(\cdot)$ is continuous and the constraint set is compact, for each $k\in\mathbb{N}$, let $x^k$ be a global solution of the above problem and let us show that $\{x^k\}_{k\in\mathbb{N}}\to\overline{x}$.

By the boundedness of $\{x^k\}_{k\in\mathbb{N}}$, let $x^*$ be an arbitrary limit point of this sequence, say, $\{x^k\}_{k\in K_1}\to x^*$ for some infinite set of indexes $K_1\subseteq\mathbb{N}$, and let us show that $x^*=\overline{x}$. First, note that for all $k\in\mathbb{N}$, $$f(x^k)+\frac{1}{2}\|x^k-\overline{x}\|^2\leq\phi_k(x^k)\leq\phi_k(\overline{x})=f(\overline{x}).$$ Since $\{\phi_k(x^k)\}_{k\in K_1}$ is bounded from above, by continuity of the functions the only possibility is that $P(x^*)=0$. This implies that $h(x^*)=0$ and since we also have that $\|x^*-\overline{x}\|\leq\delta$, we conclude that $f(\overline{x})\leq f(x^*)$. But taking the limit for $k\in K_1$ in the above expression we arrive at $f(x^*)+\frac{1}{2}\|x^*-\overline{x}\|^2\leq f(\overline{x})\leq f(x^*)$, which implies that $x^*=\overline{x}$. This shows that the limit point $x^*$ is unique and hence the whole sequence $\{x^k\}_{k\in\mathbb{N}}$ converges to $\overline{x}$.

Now, for $k\in\mathbb{N}$ large enough we must have $\|x^k-\overline{x}\|<\delta$ and hence $x^k$ locally minimizes $\phi_k(\cdot)$ without constraints. This clearly implies, for $k\in\mathbb{N}$ large enough, that $\nabla\phi_k(x^k)=0$, which yields
\begin{equation}
\label{approx}
\nabla f(x^k)+x^k-\overline{x}+\sum_{i=1}^m\lambda_i^k\nabla h_i(x^k)=0,
\end{equation}
with $\lambda_i^k:=kh_i(x^k), i=1,\dots,m$. Let $\lambda^k:=(\lambda_1^k,\dots,\lambda_m^k)\in\mathbb{R}^m$ and let us show that $\{\lambda^k\}_{k\in\mathbb{N}}$ is bounded. If this is not the case, let us take a suitable infinite subset $K_2\subseteq\mathbb{N}$ such that $\{\|\lambda^k\|\}_{k\in K_2}\to+\infty$ and $\left\{\frac{\lambda^k}{\|\lambda^k\|}\right\}_{k\in K_2}\to\alpha=(\alpha_1,\dots,\alpha_m)\in\mathbb{R}^m$. Clearly, $\alpha\neq0$, since it is the limit of length one vectors. Thus, dividing both sides of \eqref{approx} by $\|\lambda^k\|$ and taking the limit for $k\in K_2$, by continuity of the gradients we arrive at
$\sum_{i=1}^m\alpha_i\nabla h_i(\overline{x})=0,$
which contradicts our assumption. Thus, considering a subsequence such that $\{\lambda^k\}$ converges to some $\lambda\in\mathbb{R}^m$ and taking the correspondent limit in \eqref{approx} we arrive at the result.
\end{proof}

\begin{remark}
Assumption \ref{licq} can be replaced by the weaker condition that the rank of the set $\{\nabla h_i(x)\}_{i=1}^m$ is locally constant for $x$ around $\overline{x}$, where one must simply rewrite \eqref{approx} in order for it to hold with the sum $\sum_{i=1}^m\lambda_i^k\nabla h_i(x^k)$ replaced by some $\sum_{i\in E}\tilde{\lambda}_i^k\nabla h_i(x^k)$, where $E\subseteq\{1,\dots,m\}$, $\{\nabla h_i(\overline{x})\}_{i\in E}$ is linearly independent, and $\tilde\lambda_i^k\in\mathbb{R}, i\in E$. See \cite{andreani2012}. This idea however does not extend to the more general context we pursue next.
\end{remark}

\section{Extension to conic constraints}

Now, let us consider the conic optimization problem
\begin{equation}
\label{prob}
\begin{aligned}
\mathop{\mbox{Minimize }}_{x\in X}&f(x),\\
\mbox{subject to }&h(x)\in \mathcal{K},
\end{aligned}
\end{equation}
where $f:X\to\mathbb{R}$ and $h:X\to Y$ are continuously differentiable functions, $X$ and $Y$ are real finite dimensional vector spaces equipped with corresponding inner products $\langle\cdot,\cdot\rangle$ and the associated norms $\|\cdot\|$, while $\mathcal{K}\subseteq Y$ is a closed and convex cone. In this setting, when $\overline{x}$ is a local solution of \eqref{prob}, under some regularity condition, we will show that there exists a so-called Lagrange multiplier $\lambda\in Y$ such that:
\begin{align}
\nabla f(\overline{x})+Dh(\overline{x})^*\lambda=0,\label{1}\\
\langle h(\overline{x}),\lambda\rangle=0,\label{2}\\
\lambda\in\mathcal{K}^\circ.\label{3}
\end{align}
Here, $Dh(\overline{x})$ is the derivative of $h$ at $\overline{x}$, $Dh(\overline{x})^*$ denotes its adjoint operator, and $\mathcal{K}^\circ:=\{w\in Y: \langle w,d\rangle\leq0 , \forall d\in \mathcal{K}\}$ is the polar cone of $\mathcal{K}$, which is closed and convex. The gradient $\nabla f(\overline{x})\in X$ is defined as the unique element of $X$ such that $Df(\overline{x})d=\langle \nabla f(\overline{x}),d\rangle$ for all $d\in X$. Condition \eqref{1} is sometimes called Lagrange's equation, while \eqref{2} is known as the complementarity condition and \eqref{3} as dual feasibility. Notice that when $\mathcal{K}:=\{0\}$ we have $\mathcal{K}^\circ=Y$, thus (\ref{1}--\ref{3}) recovers the previously defined notion of Lagrange multipliers for equality constraints but in a more general ambient space. When $\mathcal{K}:=\{0\}^m\times-\mathbb{R}^p_{+}\subset\mathbb{R}^{m+p}$, where $\mathbb{R}_{+}$ is the set of non-negative real numbers, we recover the standard nonlinear programming problem with $m$ equality constraints and $p$ inequality constraints. In this context, $\mathcal{K}^\circ=\mathbb{R}^m\times\mathbb{R}^p_{+}$, and we recover the Karush/Kuhn-Tucker conditions. Other important classes of conic optimization problems include, but are not limited to, optimization over the cone $\mathcal{K}$ of positive semidefinite matrices when $Y$ is the set of $m\times m$ symmetric matrices or optimization over the Lorentz cone $\mathcal{K}:=\{(x_1,\dots,x_m):x_1\geq\|(x_2,\dots,x_m)\|\}$ in $\mathbb{R}^m$.

The proof for this more general case, which is adapted from \cite{andreani2022}, is essentially the same; we simply need to adjust the penalization function $P(\cdot)$ defined in the proof to $P(x):=\|\Pi_{\mathcal{K}^\circ}(h(x))\|^2,$ which will coincide with the squared distance of $h(x)$ to $\mathcal{K}$, where $\Pi_{C}(\cdot)$ denotes the orthogonal projection onto the closed and convex set $C\subseteq Y$. The derivative of $P(\cdot)$ will be computed using the third item of the next lemma, which will allow us to define a sequence of approximate Lagrange multipliers by $\lambda^k=k\Pi_{\mathcal{K}^\circ}(h(x^k)), k\in\mathbb{N}$, implying dual feasibility \eqref{3}. The remaining new ingredients of the proof will also be shown in the next lemma, where the second item is used to justify the definition of $P(\cdot)$, in particular, characterizing the points $x\in X$ satisfying the constraint $h(x)\in\mathcal{K}$ as those such that $P(x)=0$, while the first item is responsible for the complementarity condition \eqref{2}.  The reader interested in the Karush/Kuhn-Tucker conditions may skip the proof of this lemma as all statements are straightforward when $\mathcal{K}:=\{0\}^m\times-\mathbb{R}^p_{+}$ (and $\mathcal{K}^\circ=\mathbb{R}^m\times\mathbb{R}^p_{+}$).
\begin{lemma}\label{moreau}For any $z\in Y$, the following hold: \hspace{0.3cm}i) $\langle\Pi_{\mathcal{K}}(z),\Pi_{\mathcal{K}^\circ}(z)\rangle=0$;\\
 ii) $\displaystyle\|\Pi_{\mathcal{K}^\circ}(z)\|=\inf_{u\in\mathcal{K}}{\|z-u\|}$;\hspace{2.53cm} iii) $\nabla \|\Pi_{\mathcal{K}^\circ}(z)\|^2=2\Pi_{\mathcal{K}^\circ}(z)$.
\end{lemma}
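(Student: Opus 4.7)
The plan is to establish Moreau's decomposition $z = \Pi_{\mathcal{K}}(z) + \Pi_{\mathcal{K}^\circ}(z)$ with orthogonal components, from which (i) and (ii) follow immediately, and then differentiate carefully for (iii). I would start from the standard variational characterization of the projection onto a closed convex set: $p = \Pi_C(z)$ if and only if $p \in C$ and $\langle z - p, u - p\rangle \leq 0$ for every $u \in C$. This is a one-line consequence of the first-order optimality condition for minimizing $\tfrac{1}{2}\|z-\cdot\|^2$ on $C$, which I would either quote or rederive in two lines.

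Applied to $p := \Pi_{\mathcal{K}}(z)$, I would exploit the cone structure by testing with $u = 0$ and $u = 2p$ (both in $\mathcal{K}$) to get $\langle z-p, p\rangle = 0$, which then reduces the variational inequality to $\langle z - p, u\rangle \leq 0$ for all $u \in \mathcal{K}$, i.e., $z - p \in \mathcal{K}^\circ$. A quick verification of the variational inequality for $\mathcal{K}^\circ$ at the candidate point $z-p$, using $\langle p, w\rangle \leq 0$ for $w \in \mathcal{K}^\circ$ and the orthogonality just obtained, shows that $z - p = \Pi_{\mathcal{K}^\circ}(z)$. This yields Moreau's decomposition and directly gives (i). For (ii), the identity $\|\Pi_{\mathcal{K}^\circ}(z)\| = \|z - \Pi_{\mathcal{K}}(z)\| = \inf_{u \in \mathcal{K}}\|z-u\|$ is then immediate from the definition of $\Pi_{\mathcal{K}}(z)$ as the closest point in $\mathcal{K}$ to $z$.

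For (iii), setting $g(z) := \|\Pi_{\mathcal{K}^\circ}(z)\|^2 = \|z - \Pi_{\mathcal{K}}(z)\|^2$, I would sandwich $g(z+h) - g(z)$ using the defining minimality of $\Pi_{\mathcal{K}}$ at both $z$ and $z+h$. On one side, $g(z+h) \leq \|(z+h) - \Pi_{\mathcal{K}}(z)\|^2 = g(z) + 2\langle \Pi_{\mathcal{K}^\circ}(z), h\rangle + \|h\|^2$. On the other side, $g(z) \leq \|z - \Pi_{\mathcal{K}}(z+h)\|^2 = g(z+h) - 2\langle \Pi_{\mathcal{K}^\circ}(z+h), h\rangle + \|h\|^2$. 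Subtracting the linear guess $2\langle \Pi_{\mathcal{K}^\circ}(z), h\rangle$ and using continuity (in fact non-expansivity) of $\Pi_{\mathcal{K}^\circ}$ to control $\langle \Pi_{\mathcal{K}^\circ}(z+h) - \Pi_{\mathcal{K}^\circ}(z), h\rangle = o(\|h\|)$, I conclude that $g$ is differentiable at $z$ with $\nabla g(z) = 2\Pi_{\mathcal{K}^\circ}(z)$.

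The main obstacle is the differentiability argument in (iii): the projection is generally only Lipschitz (not differentiable), so one cannot naively apply the chain rule to $\|\Pi_{\mathcal{K}^\circ}(z)\|^2$. The trick is to recognize $g$ as the squared distance to the convex set $\mathcal{K}$ and push all the nonsmoothness into the variational inequality, which provides exactly the two-sided quadratic sandwich used above. Everything else is purely algebraic manipulation with Moreau's decomposition.
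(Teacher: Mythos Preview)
Your proposal is correct and follows essentially the same route as the paper: both establish Moreau's decomposition by testing the variational inequality for $\Pi_{\mathcal{K}}$ at scalar multiples of $p=\Pi_{\mathcal{K}}(z)$ (you use $u=0,2p$; the paper uses $u=\alpha p$ for $\alpha$ on either side of $1$), verify that $z-p=\Pi_{\mathcal{K}^\circ}(z)$ via the variational inequality on $\mathcal{K}^\circ$, and then for (iii) obtain a two-sided quadratic sandwich on $g(z+h)-g(z)-2\langle \Pi_{\mathcal{K}^\circ}(z),h\rangle$ from the minimality of the projection at $z$ and at $z+h$, closing the argument with $1$-Lipschitzness of the projection. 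One cosmetic remark: the term you write as $o(\|h\|)$ is in fact $O(\|h\|^2)$ once you invoke non-expansivity (as you note), which is exactly the bound the paper uses.
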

\begin{proof}
We will make use of the following well known characterization of the projection onto a closed and convex set $C\subseteq Y$: $\langle z-w,c-w\rangle\leq0$ for all $c\in C$ if, and only if $w=\Pi_{\mathcal C}(z)$.

Since $\alpha\Pi_{\mathcal{K}}(z)\in\mathcal{K}$ for all $\alpha>0$, we have $\langle z-\Pi_{\mathcal{K}}(z),\alpha\Pi_{\mathcal{K}}(z)-\Pi_{\mathcal{K}}(z)\rangle\leq0$, that is, $(\alpha-1)\langle z-\Pi_{\mathcal{K}}(z),\Pi_{\mathcal{K}}(z)\rangle\leq0$ for some positive $\alpha<1$ and some $\alpha>1$, which implies $\langle z-\Pi_{\mathcal{K}}(z),\Pi_{\mathcal{K}}(z)\rangle=0$.

Defining $w=z-\Pi_{\mathcal{K}}(z)$, it is sufficient to conclude the first two statements to show that $w=\Pi_{\mathcal{K}^{\circ}}(z)$, which follows from the fact that for all $c\in\mathcal{K}^\circ$, $\langle z-w,c-w\rangle=\langle\Pi_{\mathcal{K}}(z),c-w\rangle=\langle\Pi_{\mathcal{K}}(z),c\rangle\leq0$, where the last inequality comes from the definition of the polar cone.

To compute the derivative of $\|\Pi_{\mathcal{K}^\circ}(z)\|^2$, let $h\in Y$ and note that the definition of the projection gives $\|z-\Pi_{\mathcal{K}}(z)\|\leq\|z-\Pi_{\mathcal{K}}(z+h)\|$ and $\|z+h-\Pi_{\mathcal{K}}(z+h)\|\leq\|z+h-\Pi_{\mathcal{K}}(z)\|$. Now, a straightforward calculation shows that
\begin{align*}& 2\langle\Pi_{\mathcal{K}}(z)-\Pi_{\mathcal{K}}(z+h),h\rangle+\|h\|^2&\\
&\leq\|z+h-\Pi_{\mathcal{K}}(z+h)\|^2-\|z-\Pi_{\mathcal{K}}(z)\|^2-\langle 2(z-\Pi_{\mathcal{K}}(z)),h\rangle\leq\|h\|^2.
\end{align*}
By the Cauchy-Schwarz inequality and $1$-Lipschitzness of the projection, it follows that $\langle\Pi_{\mathcal{K}}(z)-\Pi_{\mathcal{K}}(z+h),h\rangle\geq-\|h\|^2$, which shows that $\nabla \|z-\Pi_{\mathcal{K}}(z)\|^2=2(z-\Pi_{\mathcal{K}}(z))$, and the proof is completed.
\end{proof}

The first two items in the previous lemma are due to \cite{moreau} while the third item is due to \cite{projection}. Now, the linear independence assumption in the conic context for $\overline{x}$ such that $h(\overline{x})\in\mathcal{K}$ should be given considering complementarity and dual feasibility, in the following way:
\begin{equation}\label{rob}Dh(\overline{x})^*\alpha=0,\langle h(\overline{x}),\alpha\rangle=0, \alpha\in\mathcal{K}^\circ\Rightarrow\alpha=0,\end{equation}
however, we state a more geometric but equivalent condition as follows, known as Robinson's condition \cite{rob76}:
\begin{assumption}\label{robinson}At $\overline{x}$ such that $h(\overline{x})\in\mathcal{K}$, one has $0\in\mbox{int}(Dh(\overline{x})X+\mathcal{K}-h(\overline{x})),$ where $Dh(\overline{x})X$ denotes the image space of $Dh(\overline{x})$ and $\mbox{int}(\cdot)$ denotes the interior of the underlying set.
\end{assumption}

Notice that when $\mathcal{K}:=\{0\}$, this reduces to the surjectivity of $Dh(\overline{x})$, which is equivalent to Assumption \ref{licq} in the euclidean setting. In general, Assumption \ref{robinson} means that perturbing the point $h(\overline{x})\in\mathcal{K}$, one may get back to approximately satisfying the constraints near $\overline{x}$, that is, for any sufficiently small perturbation $\varepsilon\in Y$, there exists some $d\in X$ such that $h(\overline{x})+Dh(\overline{x})d+\varepsilon\in\mathcal{K}$, where $h(\overline{x}+d)\approx h(\overline{x})+Dh(\overline{x})d$. Assumption \ref{robinson} can also be viewed as the metric regularity of the set-valued mapping $-h(\cdot)+\mathcal{K}$ at $(\overline{x},0)$, which is a well-known concept from mathematical analysis.

\begin{theorem} Let $\overline{x}$ be a local solution of problem \eqref{prob} that satisfies Assumption \ref{robinson}. Then there exists a so-called Lagrange multiplier $\lambda\in Y$ such that \eqref{1}, \eqref{2}, and \eqref{3} hold.
\end{theorem}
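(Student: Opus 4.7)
The plan is to mirror the structure of Theorem~\ref{lagreq}, replacing the quadratic equality penalization by $P(x):=\|\Pi_{\mathcal{K}^\circ}(h(x))\|^2$ and using Lemma~\ref{moreau} to stand in for the pieces that previously relied on the linear structure of equality constraints. Concretely, I would fix $\delta>0$ such that $f(\overline{x})\leq f(x)$ whenever $h(x)\in\mathcal{K}$ and $\|x-\overline{x}\|\leq\delta$, and for each $k\in\mathbb{N}$ consider the subproblem of minimizing
\begin{align*}
\phi_k(x):=f(x)+\tfrac{1}{2}\|x-\overline{x}\|^2+\tfrac{k}{2}P(x)
\end{align*}
subject to $\|x-\overline{x}\|\leq\delta$. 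Let $x^k$ be a global minimizer given by Weierstrass' theorem and set $\lambda^k:=k\Pi_{\mathcal{K}^\circ}(h(x^k))$.

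The convergence $x^k\to\overline{x}$ runs word-for-word as in Theorem~\ref{lagreq}: since $h(\overline{x})\in\mathcal{K}$, item ii of Lemma~\ref{moreau} gives $P(\overline{x})=0$ and hence $\phi_k(\overline{x})=f(\overline{x})$, so $\phi_k(x^k)$ is bounded above and any cluster point $x^*$ must satisfy $P(x^*)=0$, i.e., $h(x^*)\in\mathcal{K}$; the same squeezing then forces $x^*=\overline{x}$. For $k$ large, $x^k$ lies in the interior of the ball, so $\nabla\phi_k(x^k)=0$, and applying item iii of Lemma~\ref{moreau} together with the chain rule turns this into
\begin{align*}
\nabla f(x^k)+(x^k-\overline{x})+Dh(x^k)^*\lambda^k=0.
\end{align*}
Dual feasibility \eqref{3} for $\lambda^k$, and hence for any limit $\lambda$, is automatic since $\mathcal{K}^\circ$ is a cone and $k>0$; this is the first clear improvement over the equality case, where no sign constraint was demanded.

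The remaining Karush/Kuhn-Tucker pieces come from a single identity. The proof of Lemma~\ref{moreau} yields the Moreau decomposition $z=\Pi_{\mathcal{K}}(z)+\Pi_{\mathcal{K}^\circ}(z)$, which together with item i gives $\langle z,\Pi_{\mathcal{K}^\circ}(z)\rangle=\|\Pi_{\mathcal{K}^\circ}(z)\|^2$. Applied to $z=h(x^k)$ this becomes $\langle h(x^k),\lambda^k\rangle=\|\lambda^k\|^2/k$, which combined with $h(x^k)\to h(\overline{x})$ will produce complementarity \eqref{2} in the limit as soon as $\{\lambda^k\}$ is known to be bounded; equation \eqref{1} then follows by passing to the limit in the displayed optimality condition.

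The main obstacle, just as in the equality case, is the boundedness of $\{\lambda^k\}$, and this is where Robinson's condition enters. Arguing by contradiction, I would extract a subsequence along which $\lambda^k/\|\lambda^k\|\to\alpha\neq0$; closedness of $\mathcal{K}^\circ$ gives $\alpha\in\mathcal{K}^\circ$, dividing the optimality condition by $\|\lambda^k\|$ and passing to the limit yields $Dh(\overline{x})^*\alpha=0$, and since $\|\lambda^k\|/k=\|\Pi_{\mathcal{K}^\circ}(h(x^k))\|\to 0$ by continuity of the projection and $h(\overline{x})\in\mathcal{K}$, dividing the identity $\langle h(x^k),\lambda^k\rangle=\|\lambda^k\|^2/k$ by $\|\lambda^k\|$ gives $\langle h(\overline{x}),\alpha\rangle=0$. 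This triple contradicts \eqref{rob}, so the one non-routine point that remains is to invoke the equivalence asserted by the authors between the geometric Robinson condition (Assumption~\ref{robinson}) and its dual form \eqref{rob}; that equivalence, a standard convex-analytic fact that follows from an open-mapping/Farkas-type argument for cones, is the genuine technical ingredient lying beyond the otherwise mechanical transcription of the equality-case proof.
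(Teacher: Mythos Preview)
Your proposal is correct and follows essentially the same penalization argument as the paper, with only cosmetic differences in how complementarity is read off (you use $\langle h(x^k),\lambda^k\rangle=\|\lambda^k\|^2/k$, the paper uses $\langle \Pi_{\mathcal{K}}(h(x^k)),\lambda^k\rangle=0$). The one place the paper goes further is exactly the step you flag as non-routine: instead of invoking the equivalence of Assumption~\ref{robinson} and \eqref{rob} as a black-box convex-analytic fact, the paper supplies the needed implication in two lines by writing $t\alpha=Dh(\overline{x})d+w-h(\overline{x})$ for small $t>0$, $d\in X$, $w\in\mathcal{K}$, and computing $0<t\|\alpha\|^2=\langle d,Dh(\overline{x})^*\alpha\rangle+\langle w,\alpha\rangle-\langle h(\overline{x}),\alpha\rangle\leq0$, so no open-mapping or Farkas machinery is required.
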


\begin{proof}
One can follow the proof of Theorem \ref{lagreq} by replacing the function $P(x)$ by $P(x):=\|\Pi_{\mathcal{K}^\circ}(h(x))\|^2$ in order to build a sequence $\{x^k\}_{k\in\mathbb{N}}\to\overline{x}$ such that $\nabla\phi_k(x^k)=0$ for all $k\in\mathbb{N}$ large enough, where $\phi_k(x):=f(x)+\frac{1}{2}\|x-\overline{x}\|^2+\frac{k}{2}P(x)$. Defining $\lambda^k:=k\Pi_{\mathcal{K}^\circ}(h(x^k))\in\mathcal{K}^\circ$ and computing the derivative of $P(\cdot)$ we arrive at $\nabla f(x^k)+x^k-\overline{x}+Dh(x^k)^*\lambda^k=0$ for all $k\in\mathbb{N}$ large enough. By Lemma \ref{moreau}, we have in addition that $\langle \Pi_{\mathcal{K}}(h(x^k)), \lambda^k\rangle=0$. Notice that $\{\Pi_{\mathcal{K}}(h(x^k))\}_{k\in\mathbb{N}}$ converges to $\Pi_{\mathcal{K}}(h(\overline{x}))=h(\overline{x})$ and the result follows if $\{\lambda^k\}_{k\in\mathbb{N}}$ is bounded by simply taking the limit at a convergent subsequence.

Assume by contradiction that $\{\lambda^k\}_{k\in\mathbb{N}}$ is unbounded, and let us take a suitable subsequence $K_1\subseteq\mathbb{N}$ such that $\{\|\lambda^k\|\}_{k\in K_1}\to+\infty$, $\left\{\frac{\lambda^k}{\|\lambda^k\|}\right\}_{k\in K_1}\to\alpha\in\mathcal{K}^\circ, \alpha\neq0$, and $Dh(\overline{x})^*\alpha=0$ with $\langle h(\overline{x}),\alpha\rangle=0$. The proof would end here by contradiction if we were assuming \eqref{rob}, however, by Assumption \ref{robinson}, take $t>0$ sufficiently small such that $t\alpha=Dh(\overline{x})d+w-h(\overline{x})$ for some $d\in X$ and $w\in\mathcal{K}$. Thus, $0<\langle t\alpha,\alpha\rangle=\langle Dh(\overline{x})d,\alpha\rangle+\langle w,\alpha\rangle-\langle h(\overline{x}),\alpha\rangle\leq0,$ since $\langle Dh(\overline{x})d,\alpha\rangle=\langle d,Dh(\overline{x})^*\alpha\rangle=0$, $\langle h(\overline{x}),\alpha\rangle=0$, and $\langle w,\alpha\rangle\leq0$ due to the fact that $w\in\mathcal{K}$ and $\alpha\in\mathcal{K}^\circ$. This contradiction concludes the proof.\end{proof} 

We note that it is easy to see by \eqref{lagrange} that under Assumption \ref{licq} for problem \eqref{eq}, Lagrange multipliers are unique. However, under Assumption \ref{robinson} for problem \eqref{prob} one has only compactness of the set of Lagrange multipliers (which follows similarly to our proof). Uniqueness is guaranteed when instead of the conic linear independence \eqref{rob} one requires standard linear independence, namely, requiring the implication \eqref{rob} to hold for $\alpha\in Y$ instead of for $\alpha\in \mathcal{K}^\circ$. We end by noting that this proof has inspired several new necessary optimality conditions in many other contexts; see, for instance, \cite{kanzow}, for an extension to infinite dimensional spaces.

\section*{Acknowledgment}
We would like to thank Andr\'e Salles de Carvalho for several suggestions in a first version of this notes which greatly improved our presentation.

%
\vfill\eject

\end{document}